\title[Cogenerators and Brown representability]{Constructing cogenerators in triangulated categories and Brown representability}
\author{George Ciprian Modoi}
\address{Babe\c s--Bolyai University, Faculty of Mathematics and Computer Science \\
1, Mihail Kog\u alniceanu, 400084 Cluj--Napoca, Romania}
\email{cmodoi@math.ubbcluj.ro}
\thanks{This work was supported by a grant CNCS-UEFISCDI, project number
PN-II-ID-PCE-2012-4-0100.}
\subjclass[2010]{18E30, 16D90, 55U35}
\keywords{Brown representability, homotopy category, projective module, projective complex}
\renewcommand{\iff}{if and only if }
\newcommand{\la}{\longrightarrow}
\newcommand{\N}{\mathbb{N}}
\newcommand{\Z}{\mathbb{Z}}
\newcommand{\Q}{\mathbb{Q}}
\DeclareMathOperator{\Hom}{Hom} 
\DeclareMathOperator{\Ker}{Ker}
\DeclareMathOperator{\Img}{Im}
\DeclareMathOperator{\holim}{\underleftarrow{\textrm{holim}}}
\newcommand{\A}{\mathcal{A}}
\newcommand{\C}{\mathcal{C}}
\newcommand{\CS}{\mathcal{S}}
\newcommand{\T}{\mathcal{T}}
\newcommand{\U}{\mathcal{U}}
\newcommand{\I}{\mathcal{I}}
\newcommand{\J}{\mathcal{J}}
\newcommand{\G}{\mathcal{G}}
\newcommand{\CL}{\mathcal{L}}
\newcommand{\ModR}{\hbox{\rm Mod-}\!R}
\newcommand{\ModA}{\hbox{\rm Mod-}\!A}
\newcommand{\FlatR}{\hbox{\rm Flat-}R}
\newcommand{\ProjR}{\hbox{\rm Proj-}\!R}
\newcommand{\Ab}{\mathcal{A}b}
\newcommand{\Proj}{\hbox{\rm Proj-}\!}
\newcommand{\Pproj}{\hbox{\rm Pproj-}\!}
\newcommand{\opp}{^\textit{o}}
\newcommand{\Mod}[1]{\hbox{\rm Mod-}\!({#1})}
\newcommand{\md}[1]{\hbox{\rm mod-}\!#1}
\newcommand{\Add}[1]{\mathrm{Add}\left({#1}\right)}
\newcommand{\Prod}[1]{\mathrm{Prod}({#1})}
\newcommand{\Prd}{\mathrm{Prod}}
\newcommand{\Htp}[1]{\mathbf{K}({#1})}
\newcommand{\Cpx}[1]{\mathbf{C}({#1})}
\newcommand{\Loc}[1]{\mathrm{Loc}({#1})}
\newcommand{\Coloc}[1]{\mathrm{Coloc}({#1})}
\theoremstyle{plain}
\newtheorem{thm}{Theorem}[section]
\newtheorem{lem}[thm]{Lemma}
\newtheorem{prop}[thm]{Proposition}
\newtheorem{cor}[thm]{Corollary}
\theoremstyle{definition}
\theoremstyle{remark}
\newtheorem{expl}[thm]{Example}
\begin{document}

\begin{abstract}
For a triangulated category with products we prove a formal criterion in order to satisfy Brown representability for covariant functors. 
We apply this criterion for showing that both homotopy category of projective modules and homotopy category of projective complexes satisfy 
this kind of representability.
\end{abstract}
\maketitle 


\section*{Introduction}

Brown representability theorem is an old and venerable subject. Perhaps its importance comes 
from the fact that it is a tool for producing adjoints for triangulated functors. 
To be more specific, consider a triangulated category $\T$.  
The definition and basic properties of triangulated categories are to be found in the standard 
reference \cite{N}. A (co)homological functor on $\T$ is a 
(contravariant) functor $F:\T\to\A$ into an abelian category which sends triangles to 
long exact sequences. If $\T$ has (co)products, then Brown representability 
for covariant (contravariant) functors means that all (co)homological product--preserving 
(which sends coproducts into products) functors $\T\to\Ab$ are naturally isomorphic to 
$\T(X,-)$ (respectively to $\T(-,X)$) for some $X\in\T$. Clearly Brown representability for covariant and 
contravariant functors are dual to each other. First result of this type was proved by Brown in \cite{B} for 
contravariant functors defined on the homotopy category of spectra. 
From this historical reason Brown 
representability for contravariant functors is also called (direct) 
Brown representability and that 
for covariant functors is called dual; for brevity we shall also say $\T$, respectively $\T\opp$, satisfies 
Brown representability. It is straightforward to check that if $\T$ ($\T\opp$) satisfies Brown representability, 
then every triangulated functor $\T\to\T'$ which preserves coproducts (products) has a right (left) 
adjoint. 

Remark that Brown representability for contravariant functors is better understood than its dual. 
We know when usual triangulated categories satisfy this property. For example, the derived category of a 
Grothendieck abelian category (including the derived category of modules 
or of quasicoherent sheaves) is $\alpha$-compactly generated for some regular cardinal $\alpha$, 
in the sense of \cite[Definition 8.1.6]{N} (note that Neeman calls such categories
well generated). Consequently it satisfies Brown representability for 
contravariant functors, by \cite[Theorem 8.4.2]{N}. In contrast, the homotopy category of an additive category does not 
satisfy Brown representability, unless the initial additive category 
is pure--semisimple (see \cite{MS}). 

Even if Brown representability for covariant functors holds in compactly generated triangulated 
categories, by a result proved by Neeman in \cite[Theorem 8.6.1]{N}, one of the most challenging problem left open in Neeman's book 
is if an $\alpha$-compactly generated triangulated category, where $\alpha$ is a regular cardinal 
greater than $\aleph_0$,
satisfies Brown representability for covariant functors. This paper belongs to a suite of works 
concerned with this subject. In \cite{MP} is given a criterion for Brown representability 
for contravariant functors
which is dualized in \cite{MD}, where it is shown that if $\T\opp$ is deconstractible 
then $\T\opp$ satisfies Brown representability 
(see Proposition \ref{decon} here). The paper \cite{MBD} applies
this result for derived categories of Grothendieck categories which are AB4-$n$, in 
the sense that the $n+1$-th derived functor of the direct product functor is exact. In the paper 
\cite{MKP} it is observed that Neeman's arguments in \cite{NKP} used to construct cogenerators in the 
homotopy category of projective modules may be modified and adapted in order to show that the dual
of this category is deconstructible.  
Here we formalize this last approach. Fortunately the formal results which we obtained 
(Theorem \ref{main-thm} and Corollaries \ref{t-is-u} and \ref{gp-eq-u}) may be applied not only 
to recover the main result in \cite{MKP} (see Theorem \ref{kproj-mod}), but also 
to generalize a formal criterion by Krause for Brown representability for covariant functors 
(Corollary \ref{krauseB}). As another consequence we show that the 
homotopy category of projective representations of a quiver satisfies 
Brown representability for covariant functors 
(Theorem \ref{kproj-ri}). In particular homotopy category of projective complexes of modules 
considered in \cite{S} has to satisfy this property (Example
\ref{kproj-cpx}).  
Finally we note that there is no known example 
of triangulated categories which are not compactly generated but satisfy the hypothesis of 
Krause's criterion. As we have noticed, for the fact that compactly generated triangulated categories satisfy 
Brown representability for covariant functors there are other proofs 
available. In contrast, our criterion applies for triangulated categories which are 
decidedly not compactly generated. Indeed, by \cite[Facts 2.8]{NKP}, the homotopy category of 
projective right modules over a ring which is not left coherent has to be 
only $\aleph_1$-compactly generated.

\section{A criterion for Brown representability for covariant functors}\label{general-results}

Let $\T$ be a triangulated category and denote by $\Sigma$
 its suspension functor. Let $\CS\subseteq\T$ be a set of objects.
 We 
put \[\CS^\perp=\{X\in\T\mid\T(S,X)=0\hbox{ for all }S\in\CS\},\] 
\[^\perp{\CS}=\{X\in\T\mid\T(X,S)=0\hbox{ for all }S\in\CS\}.\] Sometimes we write shortly 
$X\in\CS^\perp$ \iff $\T(\CS,X)=\{0\}$, and dual for the left hand perpendicular. 
We say that $\CS$ is 
 $\Sigma$\textsl{-stable} if it is closed under
suspensions and desuspensions, that is $\Sigma\CS\subseteq\CS$ 
and $\Sigma^{-1}\CS\subseteq\CS$. 

In definitions and remarks made in this paragraph we need $\T$ to be closed under coproducts for the direct notion 
and to be closed under products for the dual one. Recall that we say that the set of objects $\CS$ \textsl{generates} $\T$ if $\CS^\perp=\{0\}$. 
 Dually $\CS$ \textsl{cogenerates} $\T$ if $^\perp{\CS}=\{0\}$.
A \textsl{(co)localizing subcategory} of $\T$ is a triangulated subcategory which is closed under coproducts 
(respectively products). For $\CS\subseteq\T$ we denote $\Loc\CS$ and $\Coloc\CS$ the smallest 
(co)localizing subcategory containing $\CS$. It is obtained as the intersection of 
all (co)localizing subcategories which contain $\CS$. Note that if $\CS$ is $\Sigma$-stable, then 
$^\perp{\CS}$ (respectively ${\CS}^\perp$) 
is a (co)localizing subcategory. Moreover if $\CS$ is $\Sigma$-stable and $\T=\Loc\CS$ then 
$\CS$ generates $\T$, and dually $\T=\Coloc\CS$ implies $\CS$ cogenerates $\T$. 
Indeed, if we suppose $\CS^\perp\neq\{0\}$, then $^\perp{\left(\CS^\perp\right)}$ is 
localizing subcategory containing $\CS$ strictly smaller than $\T$. 

The first ingredient in the proof of the main theorem of this paper is contained in 
\cite{MD}. Here we recall it shortly.
If \[X_1\leftarrow X_2\leftarrow X_3\leftarrow \cdots \] is an inverse tower 
(indexed over $\N$) of objects in $\T$, where $\T$ is a triangulated category with products, then its 
homotopy limit is defined (up to a non--canonical isomorphism) by the triangle 
\[\holim X_n\la\prod_{n\in\N^*}X_n\stackrel{1-shift}\la\prod_{n\in\N^*}X_n\la\Sigma(\holim X_n), \]
(see \cite[dual of Definition 1.6.4]{N}).
Consider again a set of objects in $\T$ and denote it by $\CS$. We define $\Prod\CS$ to be the full
subcategory of $\T$ consisting of all direct factors of products of objects in $\CS$. 
Next we define inductively $\Prd_1(\CS)=\Prod\CS$ and $\Prd_n(\CS)$ is
the full subcategory of $\T$ which consists of all objects $Y$
lying in a triangle $X\to Y\to Z\to\Sigma X$ with $X\in\Prd_1(\CS)$
and $Y\in\Prd_n(\CS)$. Clearly the construction leads to an ascending chain
$\Prd_1(\CS)\subseteq\Prd_2(\CS)\subseteq\cdots$. Supposing $\CS$ to be 
$\Sigma$-stable, the same is true for
$\Prd_n(\CS)$, by \cite[Remark 07]{NR}.  The same
\cite[Remark 07]{NR} says, in addition, that if $X\to Y\to Z\to\Sigma X$ is a
triangle with $X\in\Prd_n(\CS)$ and $Z\in\Prd_m(\CS)$ then
$Y\in\Prd_{n+m}(\CS)$. An object $X\in\T$ will be called
$\CS$\textsl{-cofiltered} if it may be written as a homotopy limit 
$X\cong\holim X_n$ of an inverse tower, with $X_1\in\Prd_1(\CS)$, and
$X_{n+1}$ lying in a triangle $P_{n}\to X_{n+1}\to X_n\to\Sigma P_n,$
for some $P_n\in\Prd_1(\CS)$. Inductively we have
$X_n\in\Prd_n(\CS)$, for all $n\in\N^*$. 
The dual notion must surely be called \textsl{filtered}, and the terminology comes 
from the analogy with the notion of a filtered object in an abelian category (see \cite[Definition 3.1.1]{GT}). 
Using further the same analogy, we say that $\T$ (respectively, $\T\opp$) is \textsl{deconstructible} if 
$\T$ has coproducts (products) and there is a $\Sigma$-stable set $\CS\subseteq\T$, 
which is not a proper class 
such that every object $X\in\T$ is $\CS$-filtered (cofiltered). Note that we may define 
deconstructibility without assuming $\Sigma$-stability. Indeed 
if every $X\in\T$ is 
$\CS$-(co)filtered, then it is also 
$\overline{\CS}$-(co)filtered, where  $\overline{\CS}$ is the closure of $\CS$ 
under $\Sigma$ and $\Sigma^{-1}$. 

From now on, in  this section we fix $\T$ to be a triangulated category with products.  

\begin{prop}\label{decon}\cite[Theorem 8]{MD}
If $\T\opp$ is deconstructible, then $\T\opp$ satisfies Brown representability.
\end{prop}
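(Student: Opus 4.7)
The plan is to construct the representing object $X$ as a homotopy limit of a countable inverse tower in $\T$, emulating Neeman's standard Brown representability argument but with homotopy colimits and coproducts replaced by their duals, and with the compactness assumption replaced by the $\CS$-cofiltration of $\T\opp$. Fix a cohomological product-preserving functor $F : \T \to \Ab$. At the base stage I would put $X_1 = \prod_{(S,s)} S$ over the set of all pairs $(S,s)$ with $S \in \CS$ and $s \in F(S)$; product-preservation identifies $F(X_1)$ with $\prod F(S)$, so the tuple $\xi_1 = (s)_{(S,s)}$ defines a canonical element of $F(X_1)$ and, by Yoneda, a natural transformation $\tau_1 : \T(X_1,-) \to F$ which is componentwise surjective on $\CS$ via the projections $\pi_{(S,s)} : X_1 \to S$.

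Inductively, given $(X_n,\xi_n,\tau_n)$, I set $P_n = \prod_{(S,f)} S$ indexed by triples with $S \in \CS$, $f : X_n \to S$, and $F(f)(\xi_n) = 0$; the canonical map $g_n : X_n \to P_n$ satisfies $F(g_n)(\xi_n) = 0$, so embedding it in a triangle $X_{n+1} \to X_n \to P_n \to \Sigma X_{n+1}$ and applying $F$ yields, via the long exact sequence, a lift $\xi_{n+1} \in F(X_{n+1})$ of $\xi_n$. The resulting $X := \holim X_n$ is then $\CS$-cofiltered, and applying $F$ to its defining triangle together with product-preservation and cohomologicity produces the Milnor-type short exact sequence
\begin{equation*}
0 \to {\lim}^1 F(\Sigma^{-1} X_n) \to F(X) \to \lim F(X_n) \to 0,
\end{equation*}
through which the coherent family $(\xi_n)$ lifts (not uniquely) to some $\xi \in F(X)$ determining the candidate $\tau : \T(X,-) \to F$.

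The remaining step is to verify $\tau^Y$ is an isomorphism for every $Y \in \T$, which I would organize as a three-layer induction. The class of $Y$ on which $\tau^Y$ is iso is stable under suspensions, products, retracts, and triangles (the last by the five lemma), so once established on $\CS$ it extends automatically to every $\Prd_n(\CS)$. Then, for an arbitrary $\CS$-cofiltered $Y = \holim Y_m$ with $Y_m \in \Prd_m(\CS)$, both $\T(X,-)$ and $F$ satisfy the same Milnor sequence at $Y$ (both being product-preserving and cohomological), and the iso on each $Y_m$ combined with the five lemma transports to an iso at $Y$, covering all of $\T$ by deconstructibility. Surjectivity of $\tau^S$ for $S \in \CS$ is immediate from the composition $X \to X_1 \xrightarrow{\pi_{(S,s)}} S$ together with the compatibility of $\xi$ with $\xi_1$. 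I expect the main obstacle to be the \emph{injectivity} of $\tau^S$ on $\CS$: an arbitrary morphism $X \to S$ need not factor through any finite stage $X_n$, so the step-wise kernel-killing does not directly annihilate it. Overcoming this requires exploiting both the ambiguity of $\xi$ measured by ${\lim}^1 F(\Sigma^{-1} X_n)$ and the precise form of the triangles defining $X_{n+1}$ to argue that any $f \in \ker \tau^S$ must vanish in the limit.
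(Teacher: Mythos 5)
Your construction of the tower $(X_n,\xi_n)$ and of $X=\holim X_n$ is the correct dualization of the classical compactly generated argument, and your reduction ``$\tau$ iso on $\CS$ $\Rightarrow$ $\tau$ iso everywhere'' is sound (the class where $\tau$ is invertible is closed under products, suspensions, triangles and retracts, hence under homotopy limits, and deconstructibility gives $\T=\Coloc\CS$). But the step you flag at the end is not a technical loose end --- it is the entire content of the theorem, and your proposed fix does not exist in this form. In the compact case, injectivity of $\tau^S$ on generators is exactly where compactness is used: a map from a compact object into a homotopy colimit factors through a finite stage, so a kernel element is literally killed at the next stage. Dually you would need $\T(\holim X_n,S)\cong\colim\T(X_n,S)$, i.e.\ a ``cocompactness'' property of the objects of $\CS$, which is not part of the hypothesis and essentially never holds in a triangulated category with products; the ${\lim}^1$-ambiguity of $\xi$ and the shape of the triangles defining $X_{n+1}$ give no handle on a map $X\to S$ that factors through no $X_n$. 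So the proposal, as written, proves surjectivity of $\tau$ on $\CS$ and the formal bookkeeping around it, but leaves the crucial injectivity unproved.

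Note also that the paper itself does not reprove this proposition: it quotes it from \cite[Theorem 8]{MD}. The argument there (a dual of the perfect-generator technique of \cite{MP} and \cite{K}) is organized quite differently precisely to avoid the step you are stuck on: deconstructibility is used to show that for a cohomological product-preserving $F$ one can build approximations $\T(X_n,-)\to F$ which are \emph{objectwise} epimorphisms (not merely surjective on objects of $\CS$), with kernels controlled by the relevant phantom-type ideal, and the representing object is then extracted from the tower by a weak-limit/${\lim}^1$ argument that never needs maps out of $\holim X_n$ to factor through a finite stage. If you want to complete a proof along your lines, you should restructure it in that direction rather than try to force the injectivity-on-$\CS$ step.
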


In the next paragraph we shall work with ideals, so we recall their definition: An \textsl{ideal} 
in an additive category $\A$ is a collection of morphisms which is closed under addition and 
composition with arbitrary morphisms in $\A$. For $s\in\N^*$, the $s$\textsl{-th power} of an ideal $\I$ denoted
$\I^s$ is the ideal generated (that is the closure under addition) of the set 
\[\{f\mid\hbox{there are }f_1,\ldots,f_s\in\I\hbox{ such that }f=f_1\cdots f_s\}.\] 
If $\I$ and $\J$ are ideals, then to show that $\I^s\subseteq\J$ it is obviously enough to show that
the generators $f=f_1\cdots f_s$ lie in $\J$.
From now on $\CS\subseteq\T$ is a $\Sigma$-stable set. We call $\CS$-\textsl{(co)phantom} 
a map $\phi:X\to Y$ with the property $\T(\CS,\phi)=0$ (respectively $\T(\phi,\CS)=0$). 
(The notations $\T(\CS,\phi)=0$ and $\T(\phi,S)=0$ mean $\T(S,\phi)=0$, respectively 
$\T(\phi,S)=0$, for all $S\in\CS$.) Observe that 
$\phi:X\to Y$ is a phantom
\iff for every map $S\to X$ with $S\in\CS$, the composite map $S\to X\stackrel{\phi}\la Y$ 
vanishes, and dual for a cophantom. We denote
\[\Phi(\CS)=\{\phi\mid\phi\hbox{ is an }\CS\hbox{-phantom}\}\hbox{  and } 
\Psi(\CS)=\{\psi\mid\phi\hbox{ is an }\CS\hbox{-cophantom}\}.\] 
Clearly $\Phi(\CS)$ and $\Psi(\CS)$ are $\Sigma$-stable ideals in $\T$, that is they 
are also closed under $\Sigma$ and $\Sigma^{-1}$. Clearly, the ideals defined above depend on the ambient 
category $\T$. If we want to emphasize this dependence we shall write $\Phi_\T(\CS)$, respectively 
$\Psi_\T(\CS)$. 

For stating the following Lemma we need to recall what a preenvelope is: 
If $\A'\subseteq\A$ is a full subcategory of 
any category $\A$, then  
a map $Y\to Z$ in $\A$ with $Z\in\A'$ is called a $\A'$\textsl{-preenvelope of $Y$}, provided that 
every other map $Y\to Z'$ with $Z'\in\A'$ factors through $Y\to Z$. 

\begin{lem}\label{preen} If $\C$ is a set of objects in $\T$ then 
every $Y\in\T$ has a $\Prod\C$--preenvelope $Y\to Z$. Moreover if $\C$ is also $\Sigma$-stable,
then this preenveope fits in a triangle $X\overset{\psi}\to Y\to Z\to\Sigma X$, with $\psi\in\Psi(\C)$.
\end{lem}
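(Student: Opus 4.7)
The plan is to build $Z$ as a set-indexed product of copies of objects of $\C$, one for each morphism from $Y$ into an object of $\C$; to take $u\colon Y\to Z$ as the canonical map assembled from these morphisms; and then to complete $u$ to a triangle. The preenvelope property will be forced by construction, and the cophantom condition will fall out of the long exact $\Hom$ sequence applied to that triangle.

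Concretely, I would set $I=\{(C,f)\mid C\in\C,\ f\in\T(Y,C)\}$ and $Z=\prod_{(C,f)\in I}C$; this product exists by the standing hypothesis on $\T$, lies in $\Prod\C$, and comes equipped with $u\colon Y\to Z$ whose $(C,f)$-component is $f$ itself. To verify the preenvelope property, given $g\colon Y\to Z'$ with $Z'\in\Prod\C$, I would realise $Z'$ as a direct factor of some $W=\prod_{j\in J}C_j$ via $\iota\colon Z'\to W$, $p\colon W\to Z'$ with $p\iota=1_{Z'}$. Each component $(\iota g)_j\colon Y\to C_j$ corresponds to an index in $I$, so there is a canonical "relabelling" map $h\colon Z\to W$ with $hu=\iota g$; then $g=p\iota g=(ph)u$, which is the required factorization.

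For the second assertion, assuming $\C$ is $\Sigma$-stable so that $\Psi(\C)$ is defined, I would complete $u$ to a triangle $X\overset{\psi}\to Y\overset{u}\to Z\to\Sigma X$. Applying the cohomological functor $\T(-,C)$ for each $C\in\C$ produces the exact segment $\T(Z,C)\overset{u^*}\to\T(Y,C)\overset{\psi^*}\to\T(X,C)$. Since $C\in\Prod\C$, the preenvelope property is exactly the statement that $u^*$ is surjective; exactness then forces $\psi^*=0$, meaning $f\psi=0$ for all $f\colon Y\to C$ and all $C\in\C$, i.e.\ $\psi\in\Psi(\C)$. The only genuine hurdle is the size issue—making sure that $I$ is really a set—which is precisely why one assumes $\C$ is a set rather than a proper class; $\Sigma$-stability plays no role in the argument beyond ensuring that $\Psi(\C)$ carries its intended meaning as a $\Sigma$-stable two-sided ideal.
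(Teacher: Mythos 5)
Your argument is correct and is essentially the paper's own proof: the paper also takes $Z=\prod_{C\in\C,\alpha:Y\to C}C$ with the canonical map $Y\to Z$, completes it to a triangle, and notes that the preenvelope property is equivalent to $\psi\in\Psi(\C)$ (which is exactly your long-exact-sequence observation, spelled out). Your extra details — the factorization through a direct factor of a product and the remark on the set-indexing — simply make explicit what the paper leaves as "standard."
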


\begin{proof}
 The argument is standard: Let $Z=\prod_{C\in\C,\alpha:Y\to C}C$ and $Y\to Z$ the unique map 
 making commutative the diagram:
 \[\diagram 
 Y\rrto\drto_{\alpha}&& Z\dlto^{p_{C,\alpha}}\\
 &C& 
 \enddiagram\] where $p_{C,\alpha}$ is the canonical projection for all $C\in\C$ and all 
 $\alpha:Y\to C$. For a $\Sigma$-stable set $\C$, we complete this map to a triangle \[X\overset{\psi}\to Y\to Z\to \Sigma X.\]
 It may be immediately seen that the condition to be a 
$\Prod\C$--preenvelope is equivalent to $\psi\in\Psi(\C)$.
\end{proof}

\begin{lem}\label{phi-triangle}
Assume that $\C\subseteq\T$ and $\G\subseteq\T$ are two $\Sigma$-stable sets, such that 
there is $s\in\N^*$ with the property $\Psi(\C)^s\subseteq\Phi(\G)$. Then every $Y\in\T$ 
fits in a triangle  $X\overset{\phi}\to Y\to Z\to\Sigma X$, with $Z\in\Prd_s(\C)$ and 
$\phi\in\Phi(\G)$.
\end{lem}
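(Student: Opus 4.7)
The plan is to iterate Lemma \ref{preen} $s$ times, and then use the octahedral axiom to identify the cone of the resulting composite.

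First I would build an inverse tower $X_s \to X_{s-1}\to\cdots\to X_1\to X_0=Y$ as follows. Set $X_0=Y$, and having constructed $X_k$, apply Lemma \ref{preen} (which uses $\Sigma$-stability of $\C$) to obtain a triangle
\[X_{k+1}\overset{\psi_{k+1}}\la X_k\la Z_{k+1}\la\Sigma X_{k+1}\]
with $Z_{k+1}\in\Prod\C=\Prd_1(\C)$ and $\psi_{k+1}\in\Psi(\C)$. Set $\phi_k=\psi_1\circ\psi_2\circ\cdots\circ\psi_k:X_k\to Y$. By definition of $\Psi(\C)^s$ (closure under addition of $s$-fold composites of $\Psi(\C)$-maps), the map $\phi_s$ is a generator of $\Psi(\C)^s$, so by hypothesis $\phi_s\in\Phi(\G)$.

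Next I would show by induction on $k$ that the cone $W_k$ of $\phi_k:X_k\to Y$ lies in $\Prd_k(\C)$. The base case $k=1$ is immediate since $W_1=Z_1\in\Prd_1(\C)$. For the inductive step, apply the octahedral axiom to the composition $X_{k+1}\overset{\psi_{k+1}}\la X_k\overset{\phi_k}\la Y$: this produces a triangle
\[Z_{k+1}\la W_{k+1}\la W_k\la\Sigma Z_{k+1}.\]
Since $Z_{k+1}\in\Prd_1(\C)$ and $W_k\in\Prd_k(\C)$ by induction, the extension property recalled from \cite[Remark 07]{NR} yields $W_{k+1}\in\Prd_{k+1}(\C)$. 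Taking $k=s$ gives the triangle
\[X_s\overset{\phi_s}\la Y\la W_s\la\Sigma X_s\]
with $W_s\in\Prd_s(\C)$ and $\phi_s\in\Phi(\G)$, which is the desired triangle after renaming $X=X_s$, $Z=W_s$, $\phi=\phi_s$.

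I do not foresee a serious obstacle: the argument is a standard iteration combined with the octahedral axiom. The only point requiring attention is the bookkeeping in the inductive step, making sure that the correct triangle produced by octahedral is exactly the one that exhibits $W_{k+1}$ as an extension of $W_k$ by $Z_{k+1}$ (up to suspension), so that the inductive hypothesis on the filtration level $\Prd_{k+1}(\C)$ applies.
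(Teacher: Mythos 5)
Your proposal is correct and follows essentially the same route as the paper: iterate the $\Prod\C$--preenvelope triangles from Lemma \ref{preen}, note that the composite of $s$ maps in $\Psi(\C)$ lands in $\Phi(\G)$ by hypothesis, and use the octahedral axiom to show inductively that the cone of the composite lies in $\Prd_k(\C)$ via the extension property of \cite[Remark 07]{NR}. The only differences are cosmetic (indexing starting at $X_0=Y$ instead of $X_1=Y$, and stating the octahedral step as a triangle of cones rather than drawing the full diagram).
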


\begin{proof} We begin with an inductive construction. First denote $X_1=Y$, and if $X_k$ is already constructed, 
$k\in\N^*$, then use Lemma \ref{preen} to construct the triangle
\[X_{k+1}\overset{\psi_k}\to X_k\to P_k\to\Sigma X_{k+1},\]
where $X_k\to P_k$ is a $\Prod\C$--preenvelope of $X_k$ and $\psi_k\in\Psi(\C)$. Define also 
$\psi^1=1_Y:X_1\to Y$ and 
$\psi^{k+1}=\psi^k\psi_k:X_{k+1}\to Y$. Next complete them to triangles $X_k\overset{\psi^k}\to Y\to Z_k\to\Sigma X_k$, for all $k\in\N^*$. 
The octahedral 
axiom allows us to construct the commutative diagram whose rows and columns are triangles:
\[\diagram
             &\Sigma^{-1}Z_k\rdouble\dto&\Sigma^{-1}Z_k\dto &            \\
X_{k+1}\rto^{\psi_k}\ddouble&X_k\rto\dto^{\psi^k}        &P_k\rto\dto&\Sigma X_{k+1}\ddouble\\
X_{k+1}\rto_{\psi^{k+1}}    &Y\rto\dto         &Z_{k+1}\rto\dto  &\Sigma X_{k+1}        \\
             &Z_k\rdouble        &Z_k         &
\enddiagram.\]
 We have $Z_1=0$, $Z_2\cong P_1\in\Prod\C$ and the triangle in the second column of the above diagram 
 allows us to complete the induction step in order to show that $Z_{k+1}\in\Prd_k(\C)$. Clearly we also have $\psi^{k+1}\in\Psi(\C)^k$,
 thus the desired triangle is $X_{s+1}\overset{\psi^{s+1}}\la Y\la Z_{s+1}\la\Sigma X_{s+1}$. 
\end{proof}

\begin{lem}\label{same-im-fact} Assume that $\C\subseteq\T$ and $\G\subseteq\T$ are two $\Sigma$-stable sets, such that 
there is $s\in\N^*$ with the property $\Psi(\C)^s\subseteq\Phi(\G)$.
 Then every map $Y\to Z$ in $\T$ with $Z\in\Prd_n(\C)$ factors as $Y\to Z'\to Z$, where 
 $Z'\in\Prd_{n+s}(\C)$ and the induced maps 
 \[\T(G,Y)\to\T(G,Z)\hbox{ and }\T(G,Z')\to\T(G,Z)\]
 have the same image, for all $G\in\G$. 
\end{lem}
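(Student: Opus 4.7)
The plan is to reduce the problem to the cone of $\beta : Y \to Z$. First I complete $\beta$ to a triangle
\[Y \overset{\beta}{\to} Z \overset{\pi}{\to} V \to \Sigma Y,\]
and then produce $Z'$ as the fiber of a suitable map $\tau : Z \to W$ with $W \in \Prd_s(\C)$. The key idea is that such a $\tau$ can be built so that killing it simultaneously forces a lift of $\beta$ and transports the image computation to a place where a $\G$-phantom makes it collapse.

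Concretely, I apply Lemma \ref{phi-triangle} to $V$ to obtain a triangle
\[X \overset{\phi}{\to} V \overset{\eta}{\to} W \to \Sigma X\]
with $W \in \Prd_s(\C)$ and $\phi \in \Phi(\G)$. Put $\tau := \eta\pi$ and rotate the cone of $\tau$ into the triangle
\[\Sigma^{-1}W \to Z' \overset{\alpha}{\to} Z \overset{\tau}{\to} W.\]
Since $\Prd_s(\C)$ is $\Sigma$-stable, $\Sigma^{-1}W \in \Prd_s(\C)$; combined with $Z \in \Prd_n(\C)$ and the extension property of the $\Prd_n(\C)$ filtration recalled before Proposition \ref{decon}, this yields $Z' \in \Prd_{n+s}(\C)$.

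The factorization then falls out: $\tau\beta = \eta\pi\beta = 0$ because $\pi\beta = 0$, so $\beta$ lifts along $\alpha$ to some $\beta'' : Y \to Z'$ with $\alpha\beta'' = \beta$. For the image identity, fix $G \in \G$ and apply $\T(G,-)$ to both triangles: from $Y \to Z \to V$ one reads $\ima(\beta_*) = \Ker(\pi_*)$, while from $Z' \to Z \to W$ one reads $\ima(\alpha_*) = \Ker(\tau_*) = \Ker(\eta_*\pi_*)$. The crucial point is that $\phi \in \Phi(\G)$ forces $\T(G,\phi) = 0$, so the long exact sequence of $X \to V \to W$ shows that $\eta_*$ is injective; hence $\Ker(\eta_*\pi_*) = \Ker(\pi_*)$, giving $\ima(\alpha_*) = \ima(\beta_*)$, as required.

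The main obstacle is recognizing \emph{where} to invoke Lemma \ref{phi-triangle}: applying it to $Y$ would produce a triangle on the wrong side of $\beta$ and yield no natural map $Z' \to Z$, whereas applying it to the cone $V$ is what lets both the lift of $\beta$ and the injectivity of $\eta_*$ on $\T(G,-)$ drop out of a single construction. Once this choice is made, the verifications reduce to routine long-exact-sequence bookkeeping together with the extension closure of $\Prd_n(\C)$.
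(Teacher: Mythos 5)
Your proof is correct and follows essentially the same route as the paper: complete $Y\to Z$ to a triangle, apply Lemma \ref{phi-triangle} to the cone, take $Z'$ to be the fiber of the composite $Z\to W$, and deduce the image equality from the injectivity of $\T(G,V)\to\T(G,W)$ forced by $\phi\in\Phi(\G)$. The only cosmetic difference is that the paper obtains the factorization $Y\to Z'$ by completing a commutative square to a morphism of triangles, while you lift $\beta$ along $\alpha$ using $\tau\beta=0$; these are the same argument.
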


\begin{proof}
 Complete $Y\to Z$ to a triangle $Y\to Z\to Y'\to \Sigma Y$ and let 
 \[X\overset{\phi}\to Y'\to Z''\to\Sigma X,\] with 
 $\phi\in\Phi(\G)$ and $Z''\in\Prd_s(\C)$ the triangle whose existence is proved in Lemma 
 \ref{phi-triangle}. Complete the composed map $Z\to Y'\to Z''$ to a triangle 
 \[Z'\to Z\to Z''\to\Sigma Z'.\] It is clear that $Z'\in\Prd_{n+s}(\C)$. 
 We can construct the commutative diagram: 
 \[\diagram Y\rto\dto & Z\rto\ddouble & Y'\rto\dto &\Sigma Y\dto \\
            Z'\rto    & Z\rto         & Z''\rto    &\Sigma Z'
 \enddiagram\] by completing the middle square with $Y\to Z'$ in order to obtain 
 a morphism of triangles. 
 Applying the functor $\T(G,-)$ 
 with an arbitrary $G\in\G$ we get a commutative diagram with exact rows:
 \[\diagram \T(G,Y)\rto\dto & \T(G,Z)\rto\ddouble & \T(G,Y')\dto  \\
            \T(G,Z')\rto    & \T(G,Z)\rto         & \T(G,Z'')    
 \enddiagram.\] Since $\phi\in\Phi(\G)$ we deduce 
 $\T(G,Y')\to\T(G,Z'')$ is injective, so the kernels of the two right hand 
 parallel arrows are the same. But these kernels coincide to the images of the two left hand parallel arrows.   
\end{proof}

A diagram of triangulated categories and functors of the form $\CL\overset{I}\to\T\overset{Q}\to\U$ is called a
\textsl{localization sequence} if $I$ is fully faithful and has a right adjoint, $\Ker Q=\Img I$ and 
$Q$ has a right adjoint too. By formal non--sense (see 
\cite[Theorem 9.1.16]{N}) this right adjoint $Q_\rho$ of $Q$ has also to be fully faithful 
and makes $\U$ equivalent to the category $(\Img I)^\perp$. 

\begin{thm}\label{main-thm} Let $\G\subseteq\T$ be a $\Sigma$-stable set and denote $\U=\left(\G^\perp\right)^\perp$.
Suppose that there is a $\Sigma$-stable set $\C\subseteq\U$ and an integer $s\in\N^*$ such that 
$\Psi(\C)^s\subseteq\Phi(\G)$. Then $\U=\Coloc\C$, there is a localization sequence
$\G^\perp\to\T\to\U$ and $\U\opp$ satisfies Brown representability. 
\end{thm}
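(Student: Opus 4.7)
The plan is to apply Lemma \ref{phi-triangle} iteratively in order to produce, from any $Y\in\T$, a canonical decomposition into a piece lying in $\G^\perp$ and a piece lying in $\Coloc\C$. Set $X_1=Y$ and, given $X_k$, use Lemma \ref{phi-triangle} to obtain a triangle $X_{k+1}\overset{\phi_k}\la X_k\la Z_k\la\Sigma X_{k+1}$ with $\phi_k\in\Phi(\G)$ and $Z_k\in\Prd_s(\C)$. Put $\phi^k=\phi_1\cdots\phi_{k-1}\colon X_k\to Y$ and complete each $\phi^k$ to a triangle $X_k\overset{\phi^k}\la Y\la W_k\la\Sigma X_k$. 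The octahedral axiom yields $W_1=0$ together with a tower $\cdots\la W_3\la W_2\la W_1$ and triangles $Z_k\la W_{k+1}\la W_k\la\Sigma Z_k$, so inductively $W_k\in\Prd_{(k-1)s}(\C)\subseteq\Coloc\C$.

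Next I would pass to the homotopy limit. Assembling the Milnor triangles of the towers $X_\bullet$, constant $Y$, and $W_\bullet$ produces a distinguished triangle
\[\holim X_k\la Y\la\holim W_k\la\Sigma\holim X_k.\]
For any $G\in\G$ the transition maps of the tower $\T(G,X_\bullet)$ all vanish because $\phi_k\in\Phi(\G)$, so both $\lim$ and $\lim^1$ of this tower are zero (Mittag--Leffler is trivial), and the Milnor exact sequence gives $\holim X_k\in\G^\perp$. Similarly $\holim W_k\in\Coloc\C$ since $\Coloc\C$ is closed under products and triangles. Now suppose $Y\in\U=(\G^\perp)^\perp$; then $\T(\holim X_k,Y)=0$, the first map of the triangle is zero, and the triangle splits as $\holim W_k\cong Y\oplus\Sigma\holim X_k$. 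Since colocalizing subcategories of $\T$ are closed under direct summands, the summand $\Sigma\holim X_k$ lies in $\U\cap\G^\perp=0$, forcing $Y\cong\holim W_k$. This proves $\U=\Coloc\C$ (the reverse inclusion is immediate from $\C\subseteq\U$). Refining each transition $W_{k+1}\la W_k$ into $s$ consecutive steps with fibres in $\Prd_1(\C)$, done by unpeeling $\Prd_s(\C)$ via octahedra, exhibits $Y$ as $\C$-cofiltered; hence $\U\opp$ is deconstructible, and Proposition \ref{decon} yields Brown representability for $\U\opp$.

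For the localization sequence, the same decomposition triangle applied to an arbitrary $Y\in\T$ exhibits a Bousfield localization: the left-hand term is in $\G^\perp$, the right-hand term is in $\U$, and $\T(\G^\perp,\U)=0$ holds by the definition of $\U$. Consequently the inclusion $\G^\perp\hookrightarrow\T$ admits a right adjoint $Y\mapsto\holim X_k$, while the reflection $Q\colon\T\to\U$, $Y\mapsto\holim W_k$, has right adjoint the fully faithful inclusion $\U\hookrightarrow\T$ and kernel exactly $\G^\perp$, yielding the required localization sequence. The main technical obstacle I anticipate is justifying that the sequence of homotopy limits displayed above is a genuine distinguished triangle; this is not entirely formal and requires a careful diagram chase comparing the Milnor triangles at the three levels $X_\bullet$, constant $Y$, and $W_\bullet$. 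The bookkeeping for the refinement certifying $\C$-cofilteredness in the strict sense of the paper is routine but tedious, and everything else is then formal.
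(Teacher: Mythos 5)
Your proposal is correct in substance and shares the paper's overall skeleton (a tower of $\Prd$-approximations, a homotopy limit, a triangle exhibiting a piece in $\G^\perp$ and a piece in $\Coloc\C$, then \cite[Theorem 9.1.13]{N}-style localization and Proposition \ref{decon}), but it realizes the central step differently. The paper never takes a homotopy limit of a tower of triangles: it uses Lemma \ref{same-im-fact} to build maps $Y\to Z_n$ whose images $\Img\bigl(\T(G,Z_{n+1})\to\T(G,Z_n)\bigr)$ stabilize to $\T(G,Y)$, computes $\T(G,\holim Z_n)\cong\lim\T(G,Z_n)\cong\T(G,Y)$ from the exactness of that inverse limit, and only afterwards completes $Y\to\holim Z_n$ to a triangle, reading off that its third object lies in $\G^\perp$. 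You instead iterate Lemma \ref{phi-triangle} to get the $\Phi(\G)$-tower $X_\bullet$ with cones $W_\bullet$ over $Y$, and then assert the assembled triangle $\holim X_k\to Y\to\holim W_k\to\Sigma\holim X_k$; as you yourself flag, this is the one non-formal point, since cones are not functorial and the canonical maps need not form a distinguished triangle without further argument. Two remarks on this. First, for every use you make of that triangle (the Bousfield decomposition, and the splitting when $Y\in\U$) you only need \emph{some} triangle whose outer objects are $\holim X_k$ and $\holim W_k$ and whose middle object is $Y$; this weaker statement does follow from the $3\times 3$ lemma applied to the shift endomorphisms of the product triangles, so the gap is fillable, though the connecting maps you obtain are not the canonical ones. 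Second, and more efficiently, your own tower already satisfies the conclusion of Lemma \ref{same-im-fact}: from the short exact sequences $0\to\T(G,Y)\to\T(G,W_k)\to\T(G,\Sigma X_k)\to 0$ and the vanishing of $\T(G,\Sigma\phi_k)$ one sees that the image of $\T(G,W_{k+1})\to\T(G,W_k)$ equals that of $\T(G,Y)\to\T(G,W_k)$, so you could simply run the paper's $\lim$-argument on $W_\bullet$ and avoid the holim-of-triangles issue altogether. The remaining points (Milnor sequence giving $\holim X_k\in\G^\perp$ via $\Sigma$-stability of $\G$, $\holim W_k\in\Coloc\C$, the splitting argument forcing $Y\cong\holim W_k$ for $Y\in\U$, and the refinement of the $\Prd_s(\C)$-steps into $\Prod\C$-steps to meet the strict definition of $\C$-cofiltered) are handled at essentially the same level of rigor as in the paper, which also leaves the last refinement implicit.
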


\begin{proof} Note first that, by its very construction,  
$\U$ is triangulated and closed under products in $\T$.
 Fix $Y\in\T$. Construct as in Lemma \ref{phi-triangle} a triangle $X_1\overset{\phi_1}\to Y\to Z_1\to\Sigma X_1$, with
$Z_1\in\Prd_s(\C)$ and $\phi_1\in\Phi(\G)$. We use Lemma \ref{same-im-fact} in order to inductively construct 
maps $Y\to Z_n$, with $Z_n\in\Prd_{sn}(\C)$, 
$n\in\N^*$, such that every $Y\to Z_n$ factors as $Y\to Z_{n+1}\to Z_n$, 
with the abelian group homomorphisms  
$\T(G,Y)\to\T(G,Z_n)$ and  $\T(G,Z_{n+1})\to\T(G,Z_n)$ having the same image, for all $G\in\G$. 
From now on the argument runs as in the proof of \cite[Theorem 4.7]{NKP}. We recall it here for the 
reader's convenience: Let $Z=\holim Z_n$.
 We have constructed the commutative diagram in $\T$:
 \[\diagram
  Y\rdouble\dto & Y\rdouble\dto & Y\rdouble\dto & \cdots \\
  Z_1 & Z_2\lto & Z_3\lto & \cdots\lto
\enddiagram\] inducing a map $Y\to Z$. 
Fix $G\in\G$. Applying the functor $\T(G,-)$ to above diagram in $\T$ we 
get a commutative diagram of abelian groups 
\[\diagram
  \T(G,Y)\rdouble\dto & \T(G,Y)\rdouble\dto & \T(G,Y)\rdouble\dto & \cdots \\
  \T(G,Z_1) & \T(G,Z_2)\lto & \T(G,Z_3)\lto & \cdots\lto
\enddiagram\]
with the first (hence all) vertical map(s) injective, and the images of both maps ending 
in each $\T(G,Z_n)$, $n\in\N^*$, coincide. This shows that the tower below is the direct sum 
of the above one and a tower with vanishing connecting maps, hence $\T(G,Y)\cong\lim\T(G,Z_n)$ 
canonically.
Moreover the inverse limit of the second row has to be exact, thus we obtain a short exact sequence: 
\[0\to\lim\T(G,Z_n)\la\prod_{n\in\N^*}\T(G,Z_n)\overset{1-shift}\la\prod_{n\in\N^*}\T(G,Z_n)\to0.\] 
  Comparing this sequence with the one obtained by applying $\T(G,-)$ to the 
 triangle \[Z\la\prod_{n\in\N^*}Z_n\overset{1-shift}\la\prod_{n\in\N^*}Z_n\la\Sigma Z\]
 we deduce $\lim\T(G,Z_n)\cong\T(G,Z)$. Therefore the map $Y\to Z$ constructed above induces 
 an isomorphism 
$\T(G,Y)\overset{\cong}\la\T(G,Z)$. Complete $Y\to Z$ to a triangle 
\[X\to Y\to Z\to\Sigma X.\]
Since $G$ was arbitrary, we deduce $X\in\G^\perp$ and obviously $Z\in\U$. Therefore 
the triangle above corroborated with \cite[Theorem 9.1.13]{N}
proves that there is a localization sequence $\G^\perp\to\T\to\U$. 
Finally supposing $Y\in\U$ this forces $X\in\U$, because $\U$ is triangulated. Since we have also 
$X\in\G^\perp$ we infer $X=0$, thus $Y\cong Z=\holim Z_n\in\Coloc\C$, hence 
$\U=\Coloc\C$ is $\CS$--cofiltered and all we need is to apply Proposition \ref{decon}.
\end{proof}

\begin{cor}\label{t-is-u} Assume that $\C\subseteq\T$ and $\G\subseteq\T$ are two $\Sigma$-stable sets, 
such that there is $s\in\N^*$ with the property $\Psi(\C)^s\subseteq\Phi(\G)$, and assume also 
that $\G$ generates $\T$. Then $\T=\Coloc\C$ and $\T\opp$ satisfies Brown representability.
\end{cor}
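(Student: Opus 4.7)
The plan is a direct reduction to Theorem \ref{main-thm} with essentially no new technical work. The hypotheses of the corollary differ from those of the theorem only by the additional assumption that $\G$ generates $\T$; I will show this promotes the subcategory $\U = (\G^\perp)^\perp$ appearing in the theorem up to all of $\T$, after which the two conclusions of Theorem \ref{main-thm} transfer verbatim.

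First I will unwind the generation assumption. By the definition recalled at the beginning of Section \ref{general-results}, $\G$ generates $\T$ means exactly $\G^\perp = \{0\}$. Next I will compute the outer perpendicular: since $\T(0,X)=0$ for every $X \in \T$, we have $(\{0\})^\perp = \T$, and therefore $\U = (\G^\perp)^\perp = \T$. This identification makes the inclusion $\C \subseteq \U$ required by Theorem \ref{main-thm} automatic, while the remaining hypothesis $\Psi(\C)^s \subseteq \Phi(\G)$ is literally what is assumed in the corollary.

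Having checked the hypotheses of Theorem \ref{main-thm}, I simply invoke it. Its first conclusion $\U = \Coloc\C$ becomes $\T = \Coloc\C$, and its conclusion that $\U\opp$ satisfies Brown representability becomes the desired Brown representability of $\T\opp$. The localization sequence $\G^\perp \to \T \to \U$ produced by the theorem degenerates to the identity on $\T$ (since $\G^\perp = 0$) and carries no extra information. I do not expect any obstacle: the entire content of the corollary is the translation of the generating hypothesis into the collapse $\U = \T$, which is a one-line set-theoretic unwinding, and all the substantive work has already been carried out in the main theorem.
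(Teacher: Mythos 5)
Your proposal is correct and follows exactly the paper's own argument: the generating hypothesis gives $\G^\perp=\{0\}$, hence $\U=\left(\G^\perp\right)^\perp=\T$, and Theorem \ref{main-thm} then yields both conclusions directly. Nothing further is needed.
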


\begin{proof} The hypothesis $\G$ generates $\T$ means $\G^\perp=\{0\}$.
Thus one applies Theorem \ref{main-thm} with 
$\U=\left(\G^\perp\right)^\perp=\T$.
\end{proof}

\begin{cor}\label{gp-eq-u}
 Let $\G\subseteq\T$ be a $\Sigma$-stable set and denote $\U=\left(\G^\perp\right)^\perp$.
Suppose that there is a $\Sigma$-stable set $\C\subseteq\U$ and an integer $s\in\N^*$ such that 
$\Psi(\C)^s\subseteq\Phi(\G)$. Suppose in addition that $\T$ has coproducts and 
there is a localization sequence 
$\Loc\G\to\T\to\G^\perp$. Then $\Loc\G$ is equivalent to $\U$ and, consequently, 
${\Loc\G}\opp$ 
satisfies Brown representability. In particular, a localization sequence as above exists, 
provided that objects in $\G$ are 
$\alpha$-compact, for a regular cardinal $\alpha$.
\end{cor}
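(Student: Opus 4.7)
The plan is to produce a triangulated equivalence $F:\Loc\G\to\U$ and then to inherit Brown representability for $(\Loc\G)\opp$ from the Brown representability for $\U\opp$ already furnished by Theorem~\ref{main-thm}. My candidate for $F$ is the composite of the inclusion $\Loc\G\hookrightarrow\T$ with the quotient $Q:\T\to\U$ appearing in the localization sequence $\G^\perp\to\T\to\U$ of Theorem~\ref{main-thm}; write $Q_\rho:\U\to\T$ for the fully faithful right adjoint of $Q$.

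The two tools I intend to play against one another are the Bousfield-type triangles coming from the two localization sequences. Theorem~\ref{main-thm} supplies, for every $Y\in\T$, a triangle $X\to Y\to Z\to\Sigma X$ with $X\in\G^\perp$ and $Z\cong Q_\rho Q(Y)\in\U$. Dually, the hypothesised localization sequence $\Loc\G\to\T\to\G^\perp$ supplies, for every $Y\in\T$, a triangle $A\to Y\to B\to\Sigma A$ with $A\in\Loc\G$ and $B\in\G^\perp$.

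For full faithfulness of $F$ I would first note that ${}^\perp(\G^\perp)$ is a localizing subcategory of $\T$ containing $\G$, hence containing $\Loc\G$; then for $X,X'\in\Loc\G$, applying $\T(X,-)$ to the first triangle attached to $X'$ annihilates the $\G^\perp$-parts and yields $\T(X,X')\cong\T(X,Q_\rho Q(X'))\cong\U(F(X),F(X'))$ via the adjunction $Q\dashv Q_\rho$. For essential surjectivity, given $Y\in\U$, I would apply $Q$ to the second triangle attached to $Y$: since $B\in\G^\perp=\Ker Q$, this forces $F(A)=Q(A)\cong Q(Y)\cong Y$ (the last isomorphism because $Y\in\U$ gives $Q_\rho Q(Y)\cong Y$), with $A\in\Loc\G$; so $F$ is essentially surjective. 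For the final assertion, when the objects of $\G$ are $\alpha$-compact for some regular cardinal $\alpha$, $\Loc\G$ is a well-generated triangulated category by Neeman's results in \cite{N}; applying Brown representability for $(\Loc\G)\opp$ to the functor $\T(-,Y)|_{\Loc\G}$ for each $Y\in\T$ produces the right adjoint to the inclusion $\Loc\G\hookrightarrow\T$, from which the desired localization sequence $\Loc\G\to\T\to\G^\perp$ follows.

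The main obstacle, as far as I can see, is conceptual rather than technical: one must recognize that the hypothesis of a localization sequence $\Loc\G\to\T\to\G^\perp$ is precisely what places $\G^\perp$ in the middle of two complementary Bousfield localizations, forcing $\Loc\G$ and $\U=(\G^\perp)^\perp$ to be identified via $Q$. Once that configuration is seen, the verifications reduce to straightforward bookkeeping with the two Bousfield triangles.
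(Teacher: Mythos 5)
Your construction of the equivalence is correct and is in substance the paper's own argument made explicit: the paper observes that the two localization sequences $\G^\perp\to\T\to\U$ (from Theorem \ref{main-thm}) and $\Loc\G\to\T\to\G^\perp$ (the hypothesis) both exhibit their middle term modulo $\G^\perp$, so that $\Loc\G$ and $\U$ are each equivalent to the Verdier quotient $\T/\G^\perp$, while you realize this identification concretely as $Q\circ\iota$ and check full faithfulness and essential surjectivity with the two Bousfield triangles. That verification is fine (note that $\Loc\G\subseteq{}^\perp\left(\G^\perp\right)$ indeed uses the assumption that $\T$ has coproducts, so that ${}^\perp\left(\G^\perp\right)$ is localizing, which you do use), and transporting Brown representability for $\U\opp$ along the equivalence to ${\Loc\G}\opp$ is exactly as in the paper.

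The final ``in particular'' step, however, is wrong as written. To obtain a right adjoint of the coproduct-preserving inclusion $\Loc\G\hookrightarrow\T$ one represents, for each $Y\in\T$, the contravariant cohomological functor $\T(-,Y)|_{\Loc\G}$, which sends coproducts to products; this needs the \emph{direct} Brown representability of $\Loc\G$ (in the paper's terminology, ``$\Loc\G$ satisfies Brown representability''), supplied by \cite[Theorem 8.4.2]{N} once $\Loc\G$ is $\alpha$-compactly generated. You instead invoke ``Brown representability for $(\Loc\G)\opp$'', i.e.\ the dual, covariant version: that statement has the wrong variance for the functor $\T(-,Y)|_{\Loc\G}$ (it represents product-preserving covariant functors and produces left adjoints of product-preserving functors, not right adjoints of coproduct-preserving ones), and quoting it here would in any case be circular, since dual Brown representability for $\Loc\G$ is precisely what the first part of the corollary deduces from the assumed localization sequence. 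Replacing this citation by the direct Brown representability of the well generated category $\Loc\G$ --- which is exactly the paper's move --- repairs the argument, and the rest of your conclusion then follows as you state.
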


\begin{proof} First apply Theorem \ref{main-thm} in order to obtain a localization 
sequence $\G^\perp\to\T\to\U$. Together with the localization sequence whose existence 
is supposed in the hypothesis, this shows that both 
 categories $\Loc\G$ and $\U$ are equivalent to the Verdier quotient $\T/\G^\perp$, hence they are 
 equivalent to each other. Finally provided that objects in $\G$ are $\alpha$-compact, we know 
 by \cite[Theorem 8.4.2]{N} that $\Loc\G$ satisfies 
 Brown representability. 
 Consequently the inclusion functor $\Loc\G\to\T$ which preserves coproducts must have 
 a right adjoint and a localization sequence 
 $\Loc\G\to\T\to\G^\perp$ exists.
\end{proof}

In the end of this section let observe that the general version of Brown representability for 
covariant functors proved in \cite{K} is a consequence of our criterion. 
In order to do that, let $\T$ be a triangulated category with products and coproducts. Recall 
from \cite[Definition 2]{K} that a \textsl{set of symmetric generators} for $\T$ is a set $\G\subseteq\T$
which generates $\T$ such that  and there is another set $\C\subseteq\T$ with the property 
that for every map $X\to Y$ in $\T$ the induced map $\T(G,X)\to\T(G,Y)$ is surjective 
for all $G\in\G$  \iff the induced map $\T(Y,C)\to\T(X,C)$ is injective for all $C\in\C$. 
Completing the map $X\to Y$ to a triangle it is easy to see that the last condition is equivalent 
to the fact $\Phi(\G)=\Psi(\C)$. Remark also that without losing the generality, we may suppose 
the sets $\G$ and $\C$ to be $\Sigma$-closed. Applying Corollary \ref{t-is-u} we obtain:

\begin{cor}\label{krauseB}\cite[Theorem B]{K}
 If $\T$ has products, coproducts and a set of symmetric generators, then $\T\opp$ satisfies Brown representability.
\end{cor}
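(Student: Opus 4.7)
The plan is to verify the hypotheses of Corollary \ref{t-is-u} with $s=1$, since the definition of symmetric generators essentially packages up the ideal equality $\Phi(\G)=\Psi(\C)$.

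First I would make precise the claim, asserted in the paragraph before the corollary, that the surjectivity/injectivity condition in the definition of symmetric generators is equivalent to $\Phi(\G)=\Psi(\C)$. Given a map $f\colon X\to Y$, complete it to a triangle $X\overset{f}\to Y\overset{g}\to Z\to\Sigma X$. The long exact sequence obtained by applying $\T(G,-)$ shows that $\T(G,f)$ is surjective for all $G\in\G$ \iff the subsequent map $\T(G,Y)\to\T(G,Z)$ is zero for all $G\in\G$, that is \iff $g\in\Phi(\G)$. Dually, applying $\T(-,C)$ shows that $\T(f,C)$ is injective for all $C\in\C$ \iff $g\in\Psi(\C)$. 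Since any morphism of $\T$ appears as the connecting map $g$ in such a triangle (complete $g$ in the other direction), the symmetric generator axiom is equivalent to $\Phi(\G)=\Psi(\C)$ as ideals of $\T$.

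Next I would invoke the remark in the excerpt that we may replace $\G$ and $\C$ by their closures under $\Sigma$ and $\Sigma^{-1}$. This preserves cardinality, preserves the property that $\G$ generates $\T$ (since $\Sigma^{\pm 1}\G$ generates whenever $\G$ does), and preserves the equality $\Phi(\G)=\Psi(\C)$ because these ideals already behave well under $\Sigma$-closure of the defining sets. So we may assume $\C$ and $\G$ are both $\Sigma$-stable.

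Finally, taking $s=1$, we have
\[
\Psi(\C)^1=\Psi(\C)=\Phi(\G)\subseteq\Phi(\G),
\]
so every hypothesis of Corollary \ref{t-is-u} is fulfilled: $\C,\G\subseteq\T$ are $\Sigma$-stable, $\Psi(\C)^s\subseteq\Phi(\G)$ for $s=1$, and $\G$ generates $\T$. Corollary \ref{t-is-u} therefore yields $\T=\Coloc\C$ and, in particular, that $\T\opp$ satisfies Brown representability, which is exactly the conclusion sought. There is no real obstacle: all the technical work is absorbed into Theorem \ref{main-thm} and Corollary \ref{t-is-u}; the only point demanding care is the bookkeeping around completing an arbitrary morphism to a triangle so as to translate the Krause-style surjectivity/injectivity condition into the language of $\G$-phantom and $\C$-cophantom ideals.
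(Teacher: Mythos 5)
Your proposal is correct and follows the paper's own route exactly: the paragraph preceding the corollary makes the same translation of the symmetric generator condition into the ideal equality $\Phi(\G)=\Psi(\C)$ by completing a map to a triangle, assumes $\Sigma$-closure without loss of generality, and then applies Corollary \ref{t-is-u} (with $s=1$). Your write-up merely spells out the long-exact-sequence bookkeeping that the paper leaves implicit.
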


Remark also 
that hypotheses in \cite[Theorem B]{K} are general enough to include the case of compactly 
generated categories.

\section{Applications: Homotopy categories of projective representations of quivers}\label{applications}

Let $\A$ be an additive category. \textsl{Complexes (cohomologically graded)} over 
$\A$ are sequences of the form
\[X=\cdots\to X^{n-1}\stackrel{d^{n-1}}\la X^n\stackrel{d^n}\la X^{n+1}\to\cdots\]
with $X^n\in\A$, $n\in\Z$, and $d^nd^{n-1}=0$; these morphisms are called \textsl{differentials}.
Morphisms of complexes are collections of morphisms in $\A$ commuting with differentials. In this way 
complexes over $\A$ form a category denoted $\Cpx\A$. Limits and colimits in the category $\Cpx\A$ 
are computed component--wise, provided that the respective constructions may be performed in $\A$.
In particular $\Cpx\A$ is abelian (Grothendieck) if $\A$ is so.

Two maps of complexes $(f^n)_{n\in\Z},(g^n)_{n\in\Z}:X\to Y$ are 
homotopically equivalent if there are $s^n:X^n\to Y^{n-1}$, for all
$n\in\Z$, such that $f^n-g^n=d_Y^{n-1}s^n+s^{n+1}d_X^n$.
The homotopy category $\Htp\A$ has as objects 
all complexes and as morphisms equivalence classes of morphisms of complexes up to homotopy. 
It is well--known 
that $\Htp\A$ is a triangulated category with (co)products, provided that the same property 
is valid for $\A$.

Recall that a \textsl{ring with several objects} is a small preadditive category $R$, and  an $R$-module 
is a functor $R\opp\to\Ab$. (Our modules are right modules 
by default.) Clearly if the category $R$ has exactly one object, then it is nothing else than an
ordinary ring with unit, and modules are abelian groups endowed with a multiplication with scalars from $R$. 
In the sequel, the category $\A$ will be often an additive exact (that is closed under 
extensions) subcategory of the category 
$\ModR$ of modules over a ring with several objects $R$. For example, $\A$ may be $\FlatR$ of $\ProjR$ the full subcategories of all flat, 
respectively projective modules. Another 
source of examples  is the subcategory of projective complexes over a module category $R$, 
that is $\Proj{\Cpx R}$. Note then that if $\A\subseteq\ModR$ or $\A\subseteq\Cpx\ModR$ 
an additive exact category as above, then 
$\Htp\A$ is triangulated subcategory of $\Htp\ModR$ respectively $\Htp{\Cpx\ModR}$.

The general argument in the Section above is obtained by formalization of \cite{MKP}, 
where the main result is that the homotopy category of projective modules satisfies 
Brown representability for covariant functors. Consequently it is not  
surprisingly at all that an application of Section \ref{general-results} is:

\begin{thm}\label{kproj-mod}\cite[Theorem 1]{MKP}
The homotopy category of projective modules over a ring with several objects
satisfies Brown representability for covariant functors.
\end{thm}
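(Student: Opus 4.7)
The plan is to apply Corollary \ref{t-is-u} to $\T=\Htp{\ProjR}$. This category has coproducts computed degreewise (since direct sums of projectives are projective) and is known to have products as well, so the formal setup of Section \ref{general-results} applies. It is therefore enough to exhibit $\Sigma$-stable sets $\G,\C\subseteq\T$ and an integer $s\in\N^*$ with $\G^\perp=\{0\}$ and $\Psi(\C)^s\subseteq\Phi(\G)$.

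For the generating set $\G$, I would invoke Neeman's description of $\Htp{\ProjR}$ from \cite{NKP}: even when $R$ is not left coherent, this category is $\aleph_1$-compactly generated, with a canonical set of generators built from complexes whose components are countably generated projective modules (as already recalled at the end of the introduction of the present paper). Take $\G$ to be such a set, closed under $\Sigma^{\pm 1}$. By the very definition of ``generates'', $\G^\perp=\{0\}$.

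For $\C$, I would follow the strategy of \cite{MKP}, which adapts Neeman's cogenerator construction from \cite{NKP}: take $\C$ to be a $\Sigma$-stable set of suitably small bounded complexes of projective modules. The role of $\C$ is to furnish, via Lemma \ref{preen}, $\Prod\C$-preenvelopes $Y\to Z$ whose fibre maps $\psi:X\to Y$ are $\C$-cophantoms, while remaining small enough to control how they interact with morphisms from objects of $\G$.

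The technical heart, and the main obstacle, is the verification of $\Psi(\C)^s\subseteq\Phi(\G)$ for some $s\in\N^*$. Given a composition $\psi=\psi_1\cdots\psi_s$ of $\C$-cophantoms together with an arbitrary map $G\to X$ with $G\in\G$, the task is to show that the composite $G\to X\stackrel{\psi}\la Y$ is null-homotopic in $\T$. This is precisely where Neeman's approximation/Mittag--Leffler style argument from \cite{NKP} must be imported: each factor $\psi_k$ is controlled by a $\Prod\C$-preenvelope of its target, and a careful cardinality argument --- tracking how morphisms out of countably generated projective complexes lift across such cofiltrations --- shows that $s$ iterations suffice to absorb any map out of $G$ into a null-homotopy. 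Producing an explicit small value of $s$ is the real content of \cite{MKP} and is the step I expect to require the most work. Once $\Psi(\C)^s\subseteq\Phi(\G)$ is in place, Corollary \ref{t-is-u} yields immediately that $\T\opp$ satisfies Brown representability, which is the statement of Theorem \ref{kproj-mod}.
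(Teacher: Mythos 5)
Your formal framework (find $\Sigma$-stable sets $\G,\C$ and $s$ with $\Psi(\C)^s\subseteq\Phi(\G)$) is the right one, but the way you instantiate it has a genuine gap. You propose to apply Corollary \ref{t-is-u} inside $\T=\Htp{\ProjR}$ with $\C$ a set of ``suitably small bounded complexes of projective modules''. The cogenerating objects that make Neeman's argument work are not of this kind and do not live in $\Htp{\ProjR}$ at all: in \cite{NKP} (and in \cite{MKP}, which the paper formalizes) they are the objects $J(\Hom_\Z(I,\Q/\Z))$, where $I$ runs over test-complexes of left modules and $J$ is the right adjoint of the inclusion $\Htp{\FlatR}\to\Htp{\ModR}$; these are complexes of flat (pure-injective type) modules, not of projectives. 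This is exactly why the paper's proof does not work in $\Htp{\ProjR}$ but in the larger category $\T=\Htp{\FlatR}$, where such a $\C$ exists, where $\Psi(\C)$ is identified with the ideal of tensor-phantom maps (\cite[Lemma 2.8]{NKP}), and where $\Psi(\C)^2\subseteq\Phi(\G)$ holds by \cite[Lemma 1.9]{NKP}, so $s=2$. In that ambient category $\G$ does \emph{not} generate, so Corollary \ref{t-is-u} is unavailable; the paper instead uses Corollary \ref{gp-eq-u}: since the objects of $\G$ are $\aleph_1$-compact, the localization sequence $\Loc\G\to\T\to\G^\perp$ exists, and $\Htp{\ProjR}=\Loc\G$ is equivalent to $\U=\left(\G^\perp\right)^\perp$, which is the category shown to be deconstructible and hence to satisfy Brown representability.

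Concretely, two steps of your plan fail as written. First, no inclusion $\Psi(\C)^s\subseteq\Phi(\G)$ is known for a set $\C$ of bounded complexes of projectives inside $\Htp{\ProjR}$; the verification you want to ``import'' from \cite{NKP} concerns a different ideal (cophantoms with respect to duals of test-complexes, i.e.\ tensor-phantoms) computed in a different ambient category, and the paper explicitly warns that the ideals $\Phi,\Psi$ depend on the ambient category. Moreover the bound $s=2$ comes from a specific homological computation, not from a Mittag--Leffler or cardinality argument; the homotopy-limit/Mittag--Leffler part is already packaged in Theorem \ref{main-thm} and is not where the difficulty lies. Second, your justification of the hypotheses on $\T$ is off: coproducts in $\Htp{\ProjR}$ are degreewise, but products are not (products of projectives need not be projective), so products exist there only as a consequence of adjoint/representability arguments; in the paper's route this issue disappears because everything is carried out in $\Htp{\FlatR}$, whose products are obtained by applying the right adjoint $J$ to degreewise products in $\Htp{\ModR}$. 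The repair is precisely the paper's proof: run the construction in $\Htp{\FlatR}$ with Neeman's $\G$ and $\C=J(\Hom_\Z(I,\Q/\Z))$, get $s=2$, and conclude via Corollary \ref{gp-eq-u} rather than Corollary \ref{t-is-u}.
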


\begin{proof} 
 We only have to verify that the assumptions made in Corollary 
 \ref{gp-eq-u} are fulfilled. Fix a ring with several objects $R$ and denote $\T=\Htp\FlatR$.
 Note that, even if the 
 results in \cite{NKF} and \cite{NKP} are stated for rings with one, they don't make 
 use of the existence of the unit, and the same arguments may be used for rings with several 
 objects. 
 
 Let $\G$ be the closure under $\Sigma$ of the generating set for $\Htp\ProjR$ defined in 
 \cite[Construction 4.3]{NKF} of \cite[Reminder 1.5]{NKP}. 
 According to \cite[Theorem 5.9]{NKF},
 $\G$ is a a set of $\aleph_1$-compact generators for $\Htp\ProjR$. Denote 
 $\U=\left(\G^\perp\right)^\perp$. Further let $\C$ be 
 the closure under $\Sigma$ and $\Sigma^{-1}$ of the set with elements $J(\Hom_\Z(I,\Q/\Z))$ where $I$ runs 
 over all \textsl{test-complexes} of left $R$-modules in the sense of 
 \cite[Definition 1.1]{NKP}, and $J:\Htp\ModR\to\Htp\FlatR=\T$ is the right adjoint of the 
 inclusion functor $\Htp\FlatR\to\Htp\ModR$, which exists by \cite[Proposition 8.1]{NKF}. 
 Then \cite[Lemma 2.6]{NKP} implies $\C\subseteq\U$. 
 According to \cite[Lemma 2.8]{NKP}, $\Psi(\C)$ contains exactly the so called 
 \textsl{tensor-phantom} maps in 
 the sense of \cite[Definition 1.3]{NKP}. Finally $\Psi(\C)^2\subseteq\Phi(\G)$, as 
 \cite[Lemma 1.9]{NKP} states. Therefore Corollary \ref{gp-eq-u} applies, 
 hence $\Htp\ProjR=\Loc\G$ satisfies Brown representability 
 for covariant functors.
\end{proof}

\begin{cor}\label{pure-proj}
 If $R$ is a ring, then homotopy category of pure--projective modules satisfies 
 Brown representability for covariant functors. 
\end{cor}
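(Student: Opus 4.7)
The plan is to reduce the statement to Theorem \ref{kproj-mod} by identifying pure-projective $R$-modules with projective modules over an appropriate ring with several objects. Let $A$ be a small skeleton of the additive category $\md R$ of finitely presented right $R$-modules, regarded as a ring with several objects in the sense of Section \ref{applications}. The vehicle for the identification is the restricted Yoneda functor
\[\Phi:\ModR\to\Mod{A},\quad M\mapsto\Hom_R(-,M)|_{A}.\]

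The main step is to show that $\Phi$ restricts to an equivalence of additive categories $\Pproj R\simeq\Proj{A}$. Since $\Hom_R(F,-)$ preserves arbitrary coproducts whenever $F$ is finitely presented, $\Phi$ carries $\bigoplus_i F_i$ (with each $F_i\in\md R$) to the free $A$-module $\bigoplus_i A(-,F_i)$. The Yoneda lemma then gives
\[\Hom_A\bigl(\Phi\bigl(\textstyle\bigoplus_i F_i\bigr),\Phi(M)\bigr)=\prod_i\Phi(M)(F_i)=\Hom_R\bigl(\textstyle\bigoplus_i F_i,M\bigr),\]
so $\Phi$ is fully faithful on direct sums of finitely presented modules, and, by splitting idempotents, on all of $\Pproj R$. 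Essential surjectivity onto $\Proj{A}$ is equally straightforward: every free $A$-module is in the image of some $\bigoplus_i F_i$, and any idempotent cutting out a projective direct summand lifts through $\Phi$ to an idempotent of a pure-projective module, which splits in $\ModR$.

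Applying $\Phi$ component-wise to complexes and passing to the homotopy category then produces an equivalence of triangulated categories $\Htp{\Pproj R}\simeq\Htp{\Proj{A}}$, and Theorem \ref{kproj-mod} applied to the ring with several objects $A$ delivers the conclusion. I do not anticipate any real obstacle here: the only delicate point is the mild set-theoretic step of replacing $\md R$ by a small skeleton, which is exactly what allows $A$ to be treated as a small preadditive category in the framework of Section \ref{applications}.
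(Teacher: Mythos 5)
Your argument is essentially the paper's own proof: both identify $\Pproj R$ with $\Proj A$ for $A=\md R$ viewed as a ring with several objects via the restricted Yoneda functor $X\mapsto\Hom_R(-,X)|_A$, pass to homotopy categories, and invoke Theorem \ref{kproj-mod}. Your extra detail on full faithfulness, idempotent splitting and the smallness of $A$ just fills in what the paper cites as well known, so there is nothing to correct.
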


\begin{proof} Let $R$ be a ring and denote by $\Pproj R$ the category of pure 
 projective $R$-modules. 
 Denote by $A=\md R$ the full subcategory of $\ModR$ which consists of all finitely presented 
 modules, and see it as a ring with several objects. It is well known that the functor 
 \[\ModR\to\ModA,\ X\mapsto\Hom_R(-,X)|_A\] is an embedding and restrict to an 
 equivalence $\Pproj R\overset{\sim}\to\Proj{A}$. Now apply Theorem \ref{kproj-mod}.
\end{proof}

Recall that a ring is called \textsl{pure--semisimple} if every $R$-module is pure projective. 
Therefore from Corollary \ref{pure-proj} we may derive a new proof for an 
already known result (see \cite[Theorem 10 and Remark 11]{MD}):

\begin{cor}\label{pure-semisimple}
 If the ring $R$ is pure--semisimple then $\Htp\ModR$ satisfies Brown 
 representability for covariant functors. 
\end{cor}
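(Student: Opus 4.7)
The plan is to observe that the corollary is an immediate consequence of Corollary \ref{pure-proj} once we unpack what pure--semisimplicity means categorically. Recall that $R$ is pure--semisimple precisely when every $R$-module is pure--projective, i.e.\ when $\ModR = \Pproj R$ as full additive subcategories of $\ModR$. Since the formation of the homotopy category depends only on the underlying additive category, this equality lifts to a triangulated equivalence $\Htp\ModR = \Htp{\Pproj R}$.

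With that identification in hand, the argument is a one-line invocation: first I would cite the definition of pure--semisimple to conclude $\Htp\ModR = \Htp{\Pproj R}$, and then apply Corollary \ref{pure-proj}, which asserts exactly that $\Htp{\Pproj R}$ satisfies Brown representability for covariant functors. Transporting this property across the equality gives the claim for $\Htp\ModR$.

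There is essentially no obstacle here; all the work has already been done in the preceding results. The only point worth double-checking is that the equivalence $\Pproj R \simeq \Proj A$ with $A = \md R$ used in the proof of Corollary \ref{pure-proj} does not require $R$ itself to be of any special form, so that specializing to a pure--semisimple $R$ is legitimate; this is clear from the construction $X \mapsto \Hom_R(-,X)\vert_A$. Hence the proof reduces to noting $\ModR = \Pproj R$ and quoting the previous corollary.
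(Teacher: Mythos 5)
Your proof is correct and is exactly the argument the paper intends: since pure--semisimplicity means every module is pure--projective, $\Htp\ModR=\Htp{\Pproj R}$, and Corollary \ref{pure-proj} applies directly. Nothing further is needed.
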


The following proposition gives a method for constructing deconstructible triangulated categories, 
starting with one which satisfies the same property:  

\begin{prop}\label{ff-right-adj}
Let $U:\CL\to\T$ be a fully faithful functor which has a right adjoint $F:\T\to\CL$. If $\T\opp$ is deconstructible, then $\CL\opp$ is so and,
consequently, $\CL\opp$ satisfies Brown representability.
\end{prop}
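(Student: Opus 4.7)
The plan is to show directly that $\CL\opp$ is deconstructible and then invoke Proposition~\ref{decon}. Since $U$ is fully faithful with right adjoint $F$, the unit $\mathrm{id}_\CL\to FU$ is an isomorphism, so $FU\cong\mathrm{id}_\CL$. Moreover $F$, being right adjoint to a triangulated functor, is itself triangulated and preserves all limits; in particular it commutes with $\Sigma$ and with products. As a consequence $\CL$ inherits products from $\T$, since for any family $\{L_i\}\subseteq\CL$ one has $\prod_i L_i\cong F(\prod_i U(L_i))$ in $\CL$, the isomorphism being obtained by applying $\CL(M,-)$ and using the adjunction together with the full faithfulness of $U$.

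Let $\CS\subseteq\T$ be a $\Sigma$-stable set witnessing the deconstructibility of $\T\opp$, so that every object of $\T$ is $\CS$-cofiltered. I propose $\CS'=\{F(S)\mid S\in\CS\}\subseteq\CL$ as the candidate set. It is a set, and since $F$ commutes with $\Sigma$ it is $\Sigma$-stable. Given $L\in\CL$, the object $U(L)$ is $\CS$-cofiltered, so there is an inverse tower $X_1\leftarrow X_2\leftarrow\cdots$ in $\T$ with $X_1\in\Prod\CS$, together with triangles $P_n\to X_{n+1}\to X_n\to\Sigma P_n$ and $P_n\in\Prod\CS$, such that $U(L)\cong\holim X_n$. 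Applying $F$ yields a tower $F(X_1)\leftarrow F(X_2)\leftarrow\cdots$ in $\CL$ and triangles $F(P_n)\to F(X_{n+1})\to F(X_n)\to\Sigma F(P_n)$. Because $F$ preserves products and direct summands it carries $\Prod\CS$ into $\Prod{\CS'}$, so $F(X_1)$ and each $F(P_n)$ lie in $\Prod{\CS'}$. Because $F$ sends the defining triangle of $\holim X_n$ to the defining triangle of $\holim F(X_n)$, we obtain $L\cong F(U(L))\cong F(\holim X_n)\cong\holim F(X_n)$. Thus $L$ is $\CS'$-cofiltered in $\CL$.

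This shows $\CL\opp$ is deconstructible, and Proposition~\ref{decon} then implies that $\CL\opp$ satisfies Brown representability. The argument is essentially formal; the only points requiring care are that $F$ is triangulated (standard for right adjoints of triangulated functors) and that $F$ preserves the homotopy limit appearing in an $\CS$-cofiltration, both of which reduce to $F$ preserving products together with sending triangles to triangles.
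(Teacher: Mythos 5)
Your proposal is correct and follows essentially the same route as the paper: apply the product-preserving triangulated right adjoint $F$ to a $\C$-cofiltration of $U(L)$ and use $FU\cong\mathrm{id}_\CL$ (from full faithfulness of $U$) to conclude that $L$ is cofiltered by $F(\C)$, then invoke Proposition \ref{decon}. Your additional checks (that $\CL$ has products, that $F$ is triangulated and sends the defining triangle of the homotopy limit to one for $\holim F(X_n)$) are points the paper leaves implicit, but the argument is the same.
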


\begin{proof}
By hypothesis there is a set $\C$ such that $\T$ is $\C$--cofiltered. We shall show that $\CL$ is $F(\C)$-filtered, and we are done. 
Let $X\in\CL$. Then for $U(X)\in\T$ there is an inverse tower \[0=X_0\leftarrow X_1\leftarrow\cdots\] such that $U(X)\cong\holim_{n\in\N}X_n$ and
in the triangle $X_{n+1}\to X_n\to P_n\to \Sigma X_{n+1}$ we have $P_n\in\Prod\C$. Applying the product preserving triangulated functor $F$ we obtain 
$FU(X)\cong\holim_{n\in\N}F(X_n)$ and in the triangle $F(X_{n+1})\to F(X_n)\to F(P_n)\to \Sigma F(X_{n+1})$ we have $F(P_n)\in\Prod{F(\C)}$. Finally
it remains only to note that $X\cong FU(X)$ since $U$ is supposed fully faithful. 
\end{proof}

Recall that a \textsl{quiver} is a quadruple 
 $Q=(Q_0, Q_1, s, t)$ where $Q_0$ and $Q_1$ are disjoint sets whose elements are called \textsl{vertices}, 
 respectively \textsl{arrows} of $Q$, and $s,t:Q_1\to Q_0$ are two maps. If for $m\in Q_1$ 
 we have $i=s(m)$ and $j=t(m)$ 
 then we call the vertices $i$ and $j$ the source, respectively the target of the arrow $m$. 
 We write $m:i\to j$ to indicate this fact. Two arrows $m,m'\in Q_1$ are composable if 
 $s(m')=t(m)$. If this is the case we denote by $m'm$ the composition and we set $s(m'm)=s(m)$
 and $t(m'm)=t(m')$. We have just obtained a path of length $2$. Generalizing this, 
 a \textsl{path} in the quiver $Q$ is a finite sequence of composable maps; the number of the maps occuring 
 in a path is called the \textsl{length} of the path. Vertices are seen as paths of length 0, 
 or trivial paths.
 A relation in a quiver is obtained as following: Consider $\{(\gamma_i,\delta_i)\mid i\in I\}$ 
 an arbitrary set of pair of paths such that $s(\gamma_i)=s(\delta_i)$ and $t(\gamma_i)=t(\delta_i)$ 
 for all $i\in I$. We put $\gamma_i\sim\delta_i$, and whenever $\sigma$ and $\tau$ 
 are paths such that the compositions make sense we have $\sigma\gamma_i\sim\sigma\delta_i$
 and $\gamma_i\tau\sim\delta_i\tau$. Moreover for any path $\gamma$ we set $\gamma\sim\gamma$. 
It is easy to see that $\sim$ will be an equivalence 
 relation on the set of all paths in $Q$. It is clear that every quiver may be seen as a quiver 
 with relations, since equality is the poorest equivalence relation. Henceforth by a quiver we will always mean a quiver with relations.  
 A representation $X$ of the quiver $Q$ in $\ModR$ is an assignment to each vertex $i\in Q_0$ an
 $R$-module $X(i)$ and to each arrow $m:i\to j$ in $Q_1$ an $R$-linear map $X(m):X(i)\to X(j)$, 
 such that equivalent paths lead to equal linear maps. Morphisms of representations $f:X\to Y$ are collections of $R$-linear maps $f=(f_i:X(i)\to Y(i))_{i\in Q_0}$, 
with the property $f_jX(m)=Y(m)f_i$ for any arrow $m:i\to j$ in $Q_1$. 
 We obtain a category, namely the category of representations of $Q$ in $\ModR$ denoted $\Mod{R,Q}$.
 Further, let $A$ be the free $R$-module with 
 the basis $B$ the set of all paths in $Q$ modulo the equivalence relation above, 
 that is \[A=R^{(B)}=\bigoplus_{b\in B}bR,\] where 
 $bR=\{br\mid r\in R\}$ is a copy of $R$ as right $R$-module. For two elements $b=[\gamma]$ and 
 $b'=[\gamma']$ in $B$ we define the product $b'b=[\gamma'\gamma]$ if the paths $\gamma$ and 
 $\gamma'$ 
 are composable, and $b'b=0$ otherwise. Declaring that elements in $R$ 
 commute with all elements of the base, the product extends by distributivity 
 to all elements in $A$, making $A$ into an $R$-algebra, the so called \textsl{path algebra} of $Q$ 
 over $R$. The trivial paths lead to a family of orthogonal idempotents $e=e_i\in A$, 
 with $i\in Q_0$. If $Q_0$ is finite, then $\sum_{i\in Q_0}e_i$ is unit in $A$, otherwise we may 
 add an extra element $1$ to the basis $B$ which acts as unit, that is $1b=b1=b$ for any path $b$.
 A slightly different (but equivalent) approach of the matter concerning quivers, 
 may be found in \cite[Chapter II,\S1]{M} (quivers are called there diagram schemes). 
 The categories $\Mod{R,Q}$ and $\ModA$ are linked by two functors, namely $U:\Mod{R,Q}\to\ModA$, 
 given by $U(X)=\bigoplus_{i\in Q_0}X(i)$, and $F:\ModA\to\Mod{R,Q}$, $F(M)(i)=Me_i$. 

 \begin{lem}\label{u-f} 
 With the above notations the following statements hold:

\begin{itemize}
\item[{\rm a)}] The functor $F$ is the right adjoint of $U$. 
\item[{\rm b)}] $U$ is fully faithful.
\item[{\rm c)}] Both $F$ and $U$ preserve projective objects.
\end{itemize}
\end{lem}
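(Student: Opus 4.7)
The plan is to prove (a) by exhibiting the adjunction bijection directly, derive (b) by identifying the unit, and deduce (c) by combining the general adjunction criterion for preserving projectives with componentwise exactness of $F$.

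For (a), the first step is to pin down the $A$-module structure on $U(X)=\bigoplus_k X(k)$: the idempotent $e_i$ acts as the projection onto $X(i)$, and an arrow $m:i\to j$ acts as $X(m)$ on $X(i)$ and as zero on the other summands, so in particular $U(X)\cdot e_i=X(i)$. A map $\phi\in\Hom_A(U(X),M)$ then restricts to $R$-linear maps $\phi_i:X(i)\to M\cdot e_i=F(M)(i)$, and the $A$-linearity applied to the action of each arrow is precisely the compatibility with the structural maps needed to make $(\phi_i)$ a morphism $X\to F(M)$ in $\Mod{R,Q}$. Conversely, a morphism of representations $(\psi_i):X\to F(M)$ assembles diagonally into an $A$-linear map $\psi:U(X)\to M$, and these two assignments are visibly mutually inverse and natural in both variables.

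For (b), unfolding the definitions yields $FU(X)(i)=U(X)\cdot e_i=X(i)$, and the arrow actions on $FU(X)$ coincide with those of $X$ by the description of the $A$-action on $U(X)$ given above. Hence the unit $\eta_X:X\to FU(X)$ of the adjunction $U\dashv F$ is an isomorphism, which is equivalent to $U$ being fully faithful.

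For (c), I would invoke the general criterion that a left adjoint preserves projectives if and only if its right adjoint preserves epimorphisms. Any left adjoint preserves cokernels, and hence epimorphisms, so $U$ does, which gives that $F$ preserves projectives. For the other direction, $e_i\in A$ is an idempotent and so $e_iA$ is a direct summand of the right regular module $A_A$, hence projective; combined with the natural isomorphism $M\cdot e_i\cong\Hom_A(e_iA,M)$ this shows that $F(-)(i)$ is exact for every $i\in Q_0$. Since exactness in $\Mod{R,Q}$ is detected vertex-wise, $F$ itself is exact, in particular preserves epimorphisms, so $U$ preserves projectives. The only real nuisance I expect is keeping the order/source-target conventions in the path algebra consistent throughout, so that an arrow $m:i\to j$ corresponds to right multiplication sending $Me_i$ into $Me_j$; once this is fixed at the outset the three parts all reduce to formal manipulations.
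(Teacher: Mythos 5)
Parts a) and b) of your proposal, and your argument that $U$ preserves projectives, are sound and essentially the paper's own: the adjunction is the explicit bijection $f\mapsto (f_i)$ coming from $f(x)=f(xe_i)=f(x)e_i$, the unit is invertible because $FU(X)(i)=U(X)e_i=X(i)$, and since $F(M)(i)=Me_i$ is a direct summand of $M$ (equivalently $\Hom_A(e_iA,M)$ with $e_iA$ projective), $F$ is exact, so its left adjoint $U$ preserves projectives.

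The gap is in your claim that $F$ preserves projectives. You invoke the standard criterion backwards: the valid statement is that a \emph{left} adjoint preserves projectives when its \emph{right} adjoint preserves epimorphisms. Here $F$ is the right adjoint of $U$ (that is exactly part a)), so the fact that the left adjoint $U$ preserves epimorphisms says nothing about $F$; the adjunction isomorphism describes maps \emph{into} $F(M)$, not maps out of $F(P)$. The implication you use is false in general: for the ring map $\Z\to\Z/2\Z$, restriction of scalars is a right adjoint whose left adjoint $-\otimes_\Z\Z/2\Z$ preserves epimorphisms, yet restriction does not preserve projectives ($\Z/2\Z$ is free over itself but not projective over $\Z$). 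So this half of c) needs a genuine argument. The paper supplies one: $F$ preserves coproducts and direct summands, and every projective $A$-module is a direct summand of a coproduct of copies of $A$, so it suffices to show $F(A)$ is projective in $\Mod{R,Q}$; this is checked by identifying (via Mitchell's description of projectives in functor categories) the projective objects of $\Mod{R,Q}$ as $\Add{\{S_i(P)\mid i\in Q_0,\ P\in\ProjR\}}$ with $S_i(V)=\bigoplus_{i/Q_0}V$, and observing that $F(A)(i)=Ae_i$ exhibits $F(A)$ as such an object. You would need to add an argument of this kind (or else construct a right adjoint of $F$ and prove it preserves epimorphisms) to complete part c).
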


\begin{proof} a) Let $X\in\Mod{R,Q}$ and $M\in\ModA$. 
 If $f:\bigoplus_{i\in Q_0}X(i)\to M$ in an $A$-linear map, then for every $x\in X(i)$ we have 
 $f(x)=f(xe_i)=f(x)e_i\in Me_i$, showing that $f=(f_i:X(i)\to Me_i)_{i\in Q_0}$. It is not 
 hard to see that the $R$-linear maps $f_i$ have to commute with the maps induced by every 
 $i\to j$ in $Q_1$, so $f$ is a map of representations $X\to F(M)$. Conversely if $f_i:X(i)\to Me_i)_{i\in I}$ 
is a map of representations, then the family of maps $(X(i)\to Me_i\to M)_{i\in I}$ induce a unique $R$-linear map 
$f:\bigoplus_{i\in I}X_i\to M$. Moreover if $a\in A$ and $x\in\bigoplus_{i\in I}X_i$ then both are written as 
finite sums $a=\sum_{b\in B}ba_b$ and $x=\sum_{i\in I}x_i$ with $a_b\in R$ and $x_i\in X(i)$ (almost all zero).
By distributivity $f(xa)=f(x)a$, hence $f$ is $A$-linear.  
This proves the adjunction between $F$ and $U$.  

b) Let $X$ be a representation of $Q$. We have 
\[F(U(X))=F\left(\bigoplus_{i\in I}X(i)\right)=(X(i))_{i\in I}=X \]  therefore the functor $U$ is fully faithful. 

c) First observe that for $M\in\ModA$,  we have 
$F(M)(i)=Me_i$, $i\in I$, and $Me_i$ is a direct summand of $M$, hence $F$ is an 
exact functor. This implies that its left adjoint $U$ preserves projective objects. Moreover $F$ preserves 
coproducts. So for showing that $F$ preserves projective objects it is enough to show that $F(A)$ is projective in $\Mod{R,Q}$. In order to prove this we will determine better the projective objects in 
$\Mod{R,Q}$. View $Q$ as a small category with object set $Q_0$ and maps equivalence classes of paths in $Q$.
By \cite[Chapter II, \S 12]{M}, $\Mod{R,Q}$ is equivalent to the category of functors from this small category to $\ModR$, consequently 
according to \cite[Chapter VI, Theorem 4.3]{M}, projectives in $\Mod{R,Q}$ are exactly 
\[\Proj{(R,Q)}=\Add{\{S_i(P)\mid i\in Q_0\hbox{ and }P\in\ProjR\}},\]
where $S_i:\ModR\to\Mod{R,Q}$ are functors defined by \[S_i(V)=\bigoplus_{i/Q_0}V\hbox{ for all }V\in\ModR,\] 
$i/Q_0=\{([\gamma],j)\mid j\in Q_0\hbox{ and }\gamma:i\to j\hbox{ is a path}\}$, and by $\mathrm{Add}$ we 
understand the closure under direct sums and direct summands (that is, the dual of $\mathrm{Prod}$). Now, since $F(A)(i)=Ae_i$, for all $i\in Q_0$, we deduce $F(A)\in\Proj{(R,Q)}$.
\end{proof}

\begin{thm}\label{kproj-ri}
 Let $Q$ be a quiver and denote by $\Proj{(R,Q)}$ the category of projective 
 objects in the category $\Mod{R,Q}$. Then $\Htp{\Proj{(R,Q)}}$ 
 satisfies Brown representability for covariant functors. 
\end{thm}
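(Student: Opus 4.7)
The plan is to deduce this from Theorem \ref{kproj-mod} by applying Proposition \ref{ff-right-adj} to a suitable lift of the adjoint pair $(U,F)$ from Lemma \ref{u-f}. I view the path algebra $A$ of $Q$ over $R$ as a ring with several objects (taking the idempotents $e_i$, $i\in Q_0$, as objects, so no unit is needed even when $Q_0$ is infinite); Theorem \ref{kproj-mod} then applies to $\Htp{\ProjA}$, and its proof through Corollary \ref{gp-eq-u} and Theorem \ref{main-thm} actually delivers the stronger fact that $\Htp{\ProjA}\opp$ is deconstructible---which is the real hypothesis of Proposition \ref{ff-right-adj}.

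The next step is to lift $U$ and $F$ termwise to complexes. Since $F(M)(i)=Me_i$ is a direct summand of $M$, both $U$ and $F$ are exact additive functors between the module categories; by Lemma \ref{u-f}(c) they preserve projectives, so they restrict to triangulated functors
\[ U:\Htp{\Proj(R,Q)}\longrightarrow\Htp{\ProjA}, \qquad F:\Htp{\ProjA}\longrightarrow\Htp{\Proj(R,Q)}. \]
The unit and counit of the module-level adjunction are natural transformations of additive functors, so they extend termwise to natural transformations on complexes and descend to the homotopy category, exhibiting the lifted $F$ as a right adjoint of the lifted $U$. Full faithfulness of $U$ on the module level (Lemma \ref{u-f}(b)) says that the unit $\mathrm{id}\to FU$ is a natural isomorphism on $\Mod{R,Q}$; this property is preserved termwise, so $U$ remains fully faithful on the homotopy category.

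With these ingredients Proposition \ref{ff-right-adj}, applied with $\CL=\Htp{\Proj(R,Q)}$ and $\T=\Htp{\ProjA}$, yields that $\Htp{\Proj(R,Q)}\opp$ is deconstructible and therefore satisfies Brown representability for covariant functors. The only substantive verification, rather than a genuine obstacle, is that $U$ and $F$ really induce functors on the homotopy categories of projectives (automatic from exactness and preservation of projectives) and that the adjunction survives passage to the homotopy category; the remainder is the formal machinery of Proposition \ref{ff-right-adj}. If anything requires care, it is the handling of the possibly non-unital path algebra $A$ via the ring-with-several-objects viewpoint, so that Theorem \ref{kproj-mod} is legitimately applicable.
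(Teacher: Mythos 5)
Your proof is correct and follows essentially the same route as the paper: lift the adjoint pair $(U,F)$ of Lemma \ref{u-f} to triangulated functors between $\Htp{\Proj{(R,Q)}}$ and $\Htp{\ProjA}$, note $U$ stays fully faithful, and invoke the deconstructibility of $\Htp{\ProjA}\opp$ from Theorem \ref{kproj-mod} together with Proposition \ref{ff-right-adj}. The only (harmless) deviation is your treatment of a quiver with infinitely many vertices via the ring-with-several-objects viewpoint, where the paper instead adjoins a unit to the path algebra; Theorem \ref{kproj-mod} covers either formulation.
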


\begin{proof} 
Consider the path algebra $A$ of the quiver $Q$ and the pair of 
adjoint functors $U:\Mod{R,\I}\leftrightarrows\ModA:F$ defined above.  By Lemma \ref{u-f} we obtained a pair of adjoint functors 
between $\Proj{(R,Q)}$ and $\Proj{A}$, which extends to a pair of triangulated adjoint
functors (denoted with the same symbols) \[U:\Htp{\Proj{(R,Q)}}\leftrightarrows\Htp{\Proj{A}}:F\hbox{ .}\] 
In addition we know that the initial $U$ is fully faithful, so the same is true for the extended functor.  By Theorem \ref{kproj-mod},
$\Htp{\Proj{A}}$ is deconstructible,
hence Proposition \ref{ff-right-adj} applies. 
\end{proof}

\begin{expl}\label{kproj-cpx}
If in Theorem \ref{kproj-ri} we put $Q$ to be the following quiver
\[\cdots\to i-1\overset{\partial^{i-1}}\to i\overset{\partial^i}\to i+1\to\cdots,\ (i\in\Z),\] with 
relations $\partial^i\partial^{i-1}=0$, then a representation of this quiver is, obviously, 
a complex over $R$. Therefore $\Mod{R,Q}=\Cpx\ModR$, and we obtain a proof for the fact that 
the homotopy category of projective complexes of $R$-modules satisfies Brown representability 
for covariant functors.   
\end{expl}

\section*{Acknowledgements} The author thanks to Simion Breaz for asking him if Corollary \ref{pure-semisimple} 
may be derived from Theorem \ref{kproj-mod}.
He is also indebted to an anonymous referee for many comments leading to a substantial 
improving of the presentation.

\end{document}